\documentclass[10pt,reqno]{amsart}
 \topmargin 0in
 \oddsidemargin 0in
 \evensidemargin 0in   
 \textwidth 6.5in
 \textheight 8.35in


\title[Higher regularity for parabolic equations with singular drift]{Higher regularity of H\"older continuous solutions of parabolic equations with singular drift velocities}

\author{Susan Friedlander}
\address{Department of Mathematics,
University of Southern California, 3620 S.~Vermont Ave.,
Los Angeles, CA 90089} \email{\tt susanfri@usc.edu}

\author{Vlad Vicol}
\address{Department of Mathematics, University of Chicago, 5734 University Ave., Chicago, IL 60637}
\email{\tt vicol@math.uchicago.edu}

\usepackage{amsmath, amsthm, amssymb}
\usepackage{color}
\usepackage{times}

\theoremstyle{plain}
\newtheorem{theorem}{Theorem}

\newtheorem{lemma}[theorem]{Lemma}
\newtheorem{proposition}[theorem]{Proposition}
\newtheorem{corollary}[theorem]{Corollary}

\theoremstyle{definition}

\def\tilde{\widetilde}
\numberwithin{equation}{section}

\newcommand\BB[2]{\dot{B}_{#2,\infty}^{#1}}

\renewcommand\hat{\widehat}
\def\ZZ3{{\mathbb Z}^3}
\def\RR1{{\mathbb R}}
\def\RR2{{\mathbb R}^2}
\def\RR3{{\mathbb R}^3}
\def\RRd{{\mathbb R}^d}
\def\TT2{{\mathbb T}^2}

\def\eps{\varepsilon}

\def\II{\mathcal I}

\def\Dj{\Delta_j}



\begin{document}

\begin{abstract}
Motivated by an equation arising in magnetohydrodynamics, we prove that H\"older continuous weak solutions of a nonlinear parabolic equation with singular drift velocity are classical solutions. The result is proved using the space-time Besov spaces introduced by Chemin and Lerner, combined with energy estimates, without any minimality assumption on the H\"older exponent of the weak solutions.
\end{abstract}

\subjclass[2000]{76D03, 35Q35}

\keywords{Magneto-geostrophic model, higher regularity, weak solutions, space-time Besov spaces.}

\maketitle
\section{Introduction}\label{sec:intro}\setcounter{equation}{0}

In this paper we address the smoothness of H\"older continuous weak solutions of the scalar nonlinear parabolic equation with singular drift velocity
\begin{align}
  & \partial_t \theta - \Delta \theta + (u\cdot\nabla) \theta = 0 \label{eq:N1}\\
  & u_j = \partial_i T_{ij} \theta\label{eq:N2}
\end{align}on $\RRd \times (0,\infty)$, where $d\geq 2$, $\{T_{ij}\}_{i,j=1}^{d}$ is a $d\times d$ matrix of Calder\'on-Zygmund singular integral operators, and the summation convention on repeated indices is used throughout. The drift velocity is taken to be divergence-free, i.e.
\begin{align}
    & \nabla \cdot u = 0 \label{eq:N3}
\end{align}which is ensured by \eqref{eq:N2} if the matrix $\{T_{ij}\}$ is taken such that $\partial_i \partial_j T_{ij} f = 0$ for smooth functions $f$. The system \eqref{eq:N1}--\eqref{eq:N3} is supplemented with the initial condition
\begin{align}
  & \theta(\cdot,0) = \theta_0\label{eq:N4}
\end{align}where $\theta_0 \in L^2(\RRd)$ has zero mean on $\RRd$. We note that $\int_{\RRd}\theta(x,t)\, dx$ is conserved in time by solutions of \eqref{eq:N1}--\eqref{eq:N2}.

The global in time existence of finite energy weak solutions to \eqref{eq:N1}--\eqref{eq:N4} has been proven by the authors of this paper in~\cite{FriedlanderVicol}. Additionally, in \cite{FriedlanderVicol} we prove that for positive time the weak solutions are in fact H\"older continuous (see also \cite{SSSZ}). In the present paper we address the higher regularity of these H\"older continuous weak solutions, by proving that they are classical solutions (even $C^\infty$ smooth) for positive time. This result was announced in \cite[Lemma 3.4]{FriedlanderVicol}.

The motivation for studying advection-diffusion equations with drift velocities as singular as those given by \eqref{eq:N2} came from the three-dimensional equations of magneto-geostrophic dynamics, for a rapidly rotating, electrically conducting fluid (cf.~Moffatt~\cite{Moffatt}). A well studied advection-diffusion equation also arising in rotating fluids (cf.~Constantin, Majda, and Tabak~\cite{ConstantinMajdaTabak}) is the so-called critical surface quasi-geostrophic (SQG) equation. This 2-dimensional equation has the form
\begin{align}
  &\partial_t \theta + (-\Delta)^{1/2} \theta + (u \cdot \nabla) \theta = 0\label{eq:SQG:1}\\
  &u = \nabla^\perp (-\Delta)^{-1/2} \theta\label{eq:SQG:2}.
\end{align}Although there are significant differences between the systems \eqref{eq:N1}--\eqref{eq:N4} and \eqref{eq:SQG:1}--\eqref{eq:SQG:2}, in both cases $L^\infty(\RRd)$ is the critical Lebesgue space with respect to the natural scaling of the equations. The criticality of the $L^\infty$ norm with respect to scaling also holds for the modified critical surface quasi-geostrophic equation considered by Constantin, Iyer, and Wu in~\cite{ConstIyerWu}
\begin{align}
  &\partial_t \theta + (-\Delta)^{\beta/2} \theta + (u \cdot \nabla ) \theta = 0 \label{eq:MQG:1}\\
  & u= \nabla^\perp (-\Delta)^{\beta/2-1} \theta \label{eq:MQG:2}
\end{align}where $\beta \in (0,1)$. In a recent paper, Caffarelli and Vasseur~\cite{CaffVass} used De Giorgi iteration to prove that weak solutions of \eqref{eq:SQG:1}--\eqref{eq:SQG:2}, with $L^2$ initial data, are smooth for positive time. A different proof of global regularity for \eqref{eq:SQG:1}--\eqref{eq:SQG:2} was given independently by Kiselev, Nazarov, and Volberg~\cite{KiselevNazarovVolberg} (see also Kiselev and Nazarov~\cite{KiselevNazarov}). The proof of H\"older regularity of weak solutions to \eqref{eq:N1}--\eqref{eq:N4} given by the authors of the present paper in~\cite{FriedlanderVicol} is also based on a suitable modification of the De Giorgi method, along the lines of \cite{CaffVass}. Once the weak solutions to \eqref{eq:N1}--\eqref{eq:N4} are H\"older continuous, we expect to be able to bootstrap to higher regularity, since the H\"older $C^\alpha$ norm is subcritical with respect to the natural scaling of the equations, for \textit{any} $\alpha>0$. This matter is however not automatic due to the singular velocity drift $u$, which by \eqref{eq:N2} lies in $L^\infty_t C^{\alpha-1}_x \cap L^2_{t,x}$, whenever $\theta \in L^\infty_t C_x^\alpha \cap L^2_t \dot{H}^{1}_{x}$. The following theorem is the main result of the present paper.

\begin{theorem}\label{thm:higher}
Let $\theta_0\in L^2(\RRd)$ be given. Let
\begin{align}
  \theta \in L^\infty([0,\infty);L^2(\RRd)) \cap L^2((0,\infty);\dot{H}^1(\RRd)) \cap L^\infty((0,\infty);C^\alpha(\RRd)) \label{eq:thm:assumption}
\end{align}be a H\"older continuous weak solution of \eqref{eq:N1}--\eqref{eq:N4}, evolving from $\theta_0$, where $\alpha\in(0,1)$ is given. Then, the solution is classical, that is
\begin{align}
\theta \in L^\infty([t_0,\infty); C^{1,\delta}(\RRd))\label{eq:thm:conclusion}
\end{align}
for any $t_0>0$ and $\delta\in(0,1)$.
\end{theorem}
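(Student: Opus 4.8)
The plan is to run a frequency-localized bootstrap in the Chemin--Lerner spaces $\tilde{L}^\rho([t_0,T];\dot{B}^s_{p,q})$, exploiting the parabolic smoothing of the heat operator to gain derivatives at each step while controlling the singular drift through Bony's paraproduct calculus. The starting point is to use \eqref{eq:N3} to put the nonlinearity in divergence form, $(u\cdot\nabla)\theta=\ddiv(u\theta)$, so that in the Duhamel representation
\begin{equation}
\theta(t)=e^{(t-t_0)\Delta}\theta(t_0)-\int_{t_0}^t e^{(t-s)\Delta}\,\ddiv(u\theta)(s)\,ds
\end{equation}
one derivative falls on the heat kernel. Together with the two derivatives gained from $e^{t\Delta}$, it is this net gain of derivatives that should allow the regularity to be pushed upward, starting from the given data $\theta(t_0)$, which is finite in every relevant Besov space for $t_0>0$ by \eqref{eq:thm:assumption}.

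The first ingredient is the maximal parabolic smoothing estimate in Chemin--Lerner spaces, which I would establish by an energy estimate on each Littlewood--Paley block (this is the ``energy estimates'' half of the scheme). Applying $\Delta_j$ to \eqref{eq:N1} and pairing with $|\Delta_j\theta|^{p-2}\Delta_j\theta$, the drift contribution vanishes since $\nabla\cdot u=0$, while Bernstein's inequality turns the dissipation into a coercive term of size $\sim 2^{2j}\|\Delta_j\theta\|_{L^p}^p$, yielding
\begin{equation}
\frac{d}{dt}\|\Delta_j\theta\|_{L^p}+c\,2^{2j}\|\Delta_j\theta\|_{L^p}\le \|\Delta_j\ddiv(u\theta)\|_{L^p}.
\end{equation}
Grönwall's inequality applied block-by-block, followed by summation in the $\tilde{L}^\rho\dot{B}^s_{p,q}$ norm, gives the gain of $2/\rho$ derivatives (trading time-integrability for spatial regularity) together with the datum term $\|\theta(t_0)\|_{\dot{B}^s_{p,q}}$.

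The heart of the matter, and the main obstacle, is to place the forcing $\ddiv(u\theta)$ in a Chemin--Lerner space of sufficient regularity for \emph{all} $\alpha\in(0,1)$. Decomposing $u\theta$ by Bony's paraproduct, the low--high and remainder interactions require the sum of the spatial regularities of the two factors to be positive; using only the H\"older bounds $u\in L^\infty_t\dot{B}^{\alpha-1}_{\infty,\infty}$ and $\theta\in L^\infty_t\dot{B}^{\alpha}_{\infty,\infty}$ this sum equals $2\alpha-1$, which is negative for $\alpha\le 1/2$ and the estimate breaks down --- this is precisely the obstruction that a minimality assumption on $\alpha$ would remove. The resolution is to bring in the energy bound: since each $T_{ij}$ is bounded on $L^2$ and commutes with derivatives, $u_j=T_{ij}\partial_i\theta$, so the hypothesis $\theta\in L^2_t\dot{H}^1_x$ yields $u\in L^2_{t,x}=L^2_t\dot{B}^0_{2,2}$, a factor of spatial regularity $0$ rather than $\alpha-1$. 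Pairing this $L^2_t$ bound on $u$ with the $L^\infty_t C^\alpha$ bound (and the $L^\infty_{t,x}$ bound, contained in \eqref{eq:thm:assumption}) on $\theta$, every paraproduct piece acquires strictly positive total regularity, and H\"older in time ($L^2_t\cdot L^\infty_t\hookrightarrow L^2_t$) lands $u\theta$ in a space to which the smoothing estimate applies, with no lower bound on $\alpha$.

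Combining the two ingredients, I would set up an iteration in which each application of the smoothing estimate raises the spatial Besov index by a fixed positive amount $\eta=\eta(\alpha)>0$, while the new, higher regularity of $\theta$ simultaneously upgrades the regularity of $u=\partial_i T_{ij}\theta$ for the next step. After finitely many steps the index exceeds $1+\delta$, giving $\theta\in\tilde{L}^\infty([t_0,\infty);\dot{B}^{1+\delta}_{\infty,\infty})=L^\infty([t_0,\infty);C^{1,\delta}(\RRd))$, which is \eqref{eq:thm:conclusion}. At this stage $u\in L^\infty_t C^{\delta}$ is continuous, so $\theta$ is a classical solution, and feeding $C^{1,\delta}$ regularity back into the same scheme --- where the product estimates are now routine, all factors having positive regularity --- bootstraps $\theta$ to $C^\infty$ in space for every $t_0>0$. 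The only genuinely delicate point throughout is the product estimate for small $\alpha$; once the energy bound supplies positive regularity to the drift, the remainder is a bookkeeping-heavy but standard Besov-space iteration.
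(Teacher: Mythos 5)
Your central idea coincides with the paper's: the obstruction for $\alpha\le 1/2$ is the high--high (remainder) part of Bony's decomposition, and it is removed by feeding the energy bound $\theta\in L^2_t\dot H^1_x$ (hence $u\in L^2_{t,x}$) into the paraproduct so that the drift enters with nonnegative spatial regularity, while working block-by-block in Chemin--Lerner spaces so that the dissipation can be exploited on each frequency shell. Where you genuinely differ is the axis of the bootstrap. The paper's Lemma~\ref{lemma:main} keeps the regularity index fixed at $\dot B^1$ and raises the Lebesgue exponent, showing $\theta\in L^2(\II;\dot B^1_{p,2})\Rightarrow\theta\in\tilde L^2(\II;\dot B^1_{q,r})$ for $q<m_\alpha p$, $m_\alpha=(1-\alpha)/(1-2\alpha)$, via the interpolation $\Vert\Delta_k\theta\Vert_{L^q}\le\Vert\Delta_k\theta\Vert_{L^p}^{p/q}\Vert\Delta_k\theta\Vert_{L^\infty}^{1-p/q}$ applied to each of $J_1,J_2,J_3$; only once $p$ is large does it extract a small derivative gain $\epsilon_\alpha$ and embed into $\dot B^0_{\infty,1}$ to get $\nabla\theta\in L^2_tL^\infty_x$, after which $H^m$ energy estimates (Proposition~\ref{prop:main}) finish. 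You instead keep $p=2$ and raise the derivative index by a fixed amount per step. Your scheme is also uniform in $\alpha$, whereas the paper treats $\alpha>1/2$ separately by the Constantin--Iyer--Wu argument.

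Two points in your iteration are not mere bookkeeping and, as written, would break it. First, the pairing you specify --- $u\in L^2_t\dot B^0_{2,2}$ against $\theta\in L^\infty_tC^\alpha$ --- is fine for the remainder, but for the low--high paraproduct $\sum_k S_{k-1}u\,\Delta_k\theta$ it gives $\Vert S_{k-1}u\Vert_{L^2}\Vert\Delta_k\theta\Vert_{L^\infty}\le\Vert u\Vert_{L^2}\,2^{-k\alpha}\Vert\theta\Vert_{C^\alpha}$, and the factor $\Vert S_{k-1}u\Vert_{L^2}\le\Vert u\Vert_{L^2}$ never improves as $\theta$ gains regularity; consequently $\ddiv(S_{k-1}u\,\Delta_k\theta)$ stays in $\tilde L^2\dot B^{\alpha-1}_{2,\infty}$ and your iteration stalls at $\tilde L^2\dot B^{1+\alpha}_{2,\infty}$ after one step. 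You must switch H\"older exponents on that term: estimate $S_{k-1}u$ in $L^\infty_x$ (using $\Vert\Delta_lu\Vert_{L^\infty}\le C2^{l(1-\alpha)}\Vert\theta\Vert_{C^\alpha}$, summable over $l<k-1$ since $\alpha<1$) against $\Delta_k\theta$ in the improved $L^2_x$-based norm. This term-by-term choice of exponents is exactly what the paper's bounds \eqref{eq:J1:bound}--\eqref{eq:J3:bound} carry out. Second, the claim that $\theta(t_0)$ is ``finite in every relevant Besov space'' is not justified by \eqref{eq:thm:assumption}, which only gives $\theta(t_0)\in L^2\cap\dot H^1\cap C^\alpha$ for a.e.\ $t_0$; since the free evolution $e^{(t-t_0)\Delta}\theta(t_0)$ gains only one derivative in $\tilde L^2(\II)$, reaching $\tilde L^2(\II;\dot B^{1+n\alpha}_{2,2})$ requires restarting from a later a.e.\ time slice at each step (as the paper does in Proposition~\ref{prop:main}). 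Both gaps are fixable, but they are precisely where the work lies.
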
The issue of proving higher regularity of H\"older continuous solutions to a fractional advection-diffusion equation has been considered in the context of the supercritical SQG equation~\cite{ConstWu08}, the critical SQG equation~\cite{CaffVass}, the modified critical SQG equation ~\cite{ConstIyerWu,MiaoXue}, and in the recent preprint~\cite{Silvestre} which addresses a linear equation with singular drift. The natural characterization of H\"older spaces in terms of Besov spaces were utilized in \cite{ConstIyerWu} for \eqref{eq:MQG:1}--\eqref{eq:MQG:2}, respectively in \cite{ConstWu08} for the supercritical SQG equation, to prove that if a solution is $C^\alpha$ for some $\alpha\in(0,1)$, then in fact it lies in a more regular H\"older space, which can be bootstrapped to prove the classical solution is classical.

The techniques used in \cite{ConstIyerWu,ConstWu08} may be applied in order to prove higher regularity for the system \eqref{eq:N1}--\eqref{eq:N4}, but only once the $C^\alpha$ regularity is such that $\alpha>1/2$ (this was also pointed out in \cite{MiaoXue}). However, if we only know that a weak solution of \eqref{eq:N1}--\eqref{eq:N4} is in $C^\alpha$ with $\alpha\in(0,1/2)$, the velocity field is too rough, and the aforementioned method of ~\cite{ConstIyerWu,ConstWu08} does not apply directly. We find that it is necessary to use different arguments to obtain the desired result. By working in the Chemin-Lerner space-time Besov spaces (see \cite{CheminLerner}) we make use of the smoothing effect of the Laplacian at the level of each frequency shell, which enables us to take advantage of the extra a priori information that $u\in L_{t,x}^2$. The principal difficulty lies in treating the high-high frequency interaction in the paraproduct decomposition of the nonlinear term (cf.~\eqref{eq:Bony} below). The main result of this paper is the proof of higher regularity for solutions of \eqref{eq:N1}--\eqref{eq:N4} \textit{without any minimality requirement} on $\alpha>0$, and in any dimension $d\geq 2$ (cf.~Theorem~\ref{thm:higher}). The method introduced in order to prove Theorem~\ref{thm:higher} also gives new higher regularity results for the system \eqref{eq:MQG:1}--\eqref{eq:MQG:2}, in the parameter range $\beta \in (1,2)$ (cf.~Theorem~\ref{thm:MQG} below).

This paper is organized as follows. In Section~\ref{sec:preliminaries} we recall some facts about Besov spaces. Section~\ref{sec:proof:easy} gives the proof of Theorem~\ref{thm:higher} for $\alpha >1/2$, while for the case $0<\alpha<1/2$ the proof is given in Section~\ref{sec:proof:hard}. Section~\ref{sec:MQG} contains a description of our results for the modified critically dissipative SQG equations.


\section{Preliminaries} \label{sec:preliminaries} \setcounter{equation}{0}
 Let $\{\hat{\phi}_j\}_{j\in {\mathbb Z}}$ be a standard dyadic decomposition of the frequency space $\RRd$, with the Fourier support of the Schwartz function $\hat{\phi}_j$ being $\{2^{j-1} \leq |\xi| \leq 2^{j+1}\}$, and where $\sum_j \hat{\phi}_j(\xi) = 1$ on $\RRd\setminus\{0\}$. As usual, define $\Delta_j f = \phi_j \ast f$ and $S_j = \sum_{k<j} \Delta_j f$ for all Schwartz functions $f$.

 For $s\in{\mathbb R}$ and $1\leq p,q \leq \infty$ the homogeneous Besov norm $\dot{B}^{s}_{p,q}$ is classically defined as
 \begin{align}
   \Vert f  \Vert_{\dot{B}_{p,q}^s} = \left\Vert 2^{js} \Vert \Dj f \Vert_{L^p} \right\Vert_{\ell^{q}({\mathbb Z})},
 \end{align}whenever $q\in[1,\infty)$, while in the case $q=\infty$ one defines
 \begin{align}
 \Vert f \Vert_{\BB{s}{p}} = \sup_{j\in {\mathbb Z}} 2^{js} \Vert \Delta_j f \Vert_{L^p}.
 \end{align}Recall that $L^\infty \cap \BB{s}{\infty} = C^s$ is the H\"older space with index $s$, except when $s$ is a nonnegative integer (then we recover the Zygmund spaces $C_*^s$). For any $r\in[1,\infty]$ we  classically let $L^r(\II;\dot{B}^{s}_{p,q})$ denote the set of all Bochner $L^r$-integrable functions on $\II$, with values in $\dot{B}^{s}_{p,q}$, where $\II \subset [0,\infty)$ is some given time interval.

 Lastly, for $s\in{\mathbb R}$, a time interval $\II$, and $1\leq r,p,q \leq \infty$ we recall the Chemin-Lerner space-time Besov spaces $\tilde{L}^r(\II;\dot{B}^{s}_{p,q})$, with norm
  \begin{align}
 \Vert f \Vert_{\tilde{L}^r(\II;\dot{B}^{s}_{p,q})} = \left\Vert 2^{js} \left( \int_{\II} \Vert \Delta_j f(\cdot,t) \Vert_{L^p}^r\; dt\right)^{1/r} \right\Vert_{\ell^q({\mathbb Z})},
 \end{align}with the usual convention of taking a supremum in $j$ if $q=\infty$. Note that $\tilde{L}^r(\II;\dot{B}^{s}_{p,r}) = L^r(\II;\dot{B}^{s}_{p,r})$ for all $r\geq 1$.

\section{Proof of the main theorem in the case $\alpha \in (1/2,1)$} \label{sec:proof:easy} \setcounter{equation}{0}

 In this case the proof follows directly from \cite{ConstIyerWu,ConstWu08}, with only slight modifications, so we give very few details. First, note that if $\theta$ is as in the statement of the lemma, then $\theta \in L^{\infty}([t_0,\infty); \BB{\alpha_p}{p})$, where $\alpha_p = (1-2/p)\alpha$, and $p\in[2,\infty)$ is fixed, to be chosen later. Then, for $j\in {\mathbb Z}$ fixed, we have
  \begin{align}\label{eq:CIW1}
    \frac{1}{p}\frac{d}{dt} \Vert \Delta_j \theta \Vert_{L^p}^p +\int |\Delta_j \theta|^{p-2} \Delta_j \theta (-\Delta) \Delta_j \theta  = - \int |\Delta_j \theta|^{p-2} \Delta_j \theta \Delta_j (u\cdot \nabla \theta).
  \end{align}Upon integration by parts and using \cite[Proposition 29.1]{Lemarie} (cf.~\cite{Planchon}, see also~\cite{ChenMiaoZhang,Wu} for the fractionally diffusive case), the dissipative term is bounded from below as
  \begin{align}\label{eq:CIW2}
     \int |\Delta_j \theta|^{p-2} \Delta_j \theta (-\Delta) \Delta_j \theta\; dx \geq \frac{2^{2j}}{C} \Vert \Delta_j \theta \Vert_{L^p}^p,
  \end{align}where $C=C(d,p)>0$ is a sufficiently large constant. The main difficulty lies in estimating the convection term. This is achieved in \cite{ConstIyerWu,ConstWu08} by using the Bony paraproduct decomposition
  \begin{align}
    \Dj(u \cdot \nabla \theta) = \sum_{|j-k|\leq 2} \Delta_j \nabla \cdot (S_{k-1} u \Delta_k \theta) + \sum_{|j-k|\leq 2} \Delta_j (\Delta_k u \cdot \nabla S_{k-1} \theta) + \sum_{k\geq j-1} \sum_{|k-l|\leq 2} \Delta_j \nabla \cdot (\Delta_k u \Delta_l \theta).\label{eq:Bony1}
  \end{align}When integrated against $\Dj \theta |\Dj \theta|^{p-2}$, \eqref{eq:Bony1} gives rise to three terms on the right side of \eqref{eq:CIW1}. The first two terms (when $|j-k|\leq 2$) may be bounded favorably for any $\alpha >0$, by first integrating by parts the derivative contained in $S_{k-1}u_i =  \partial_j S_{k-1}T_{ij} \theta$, then using a commutator estimate, the H\"older and Bernstein inequalities (see~\cite{ConstIyerWu} for details). However, the third term on the right side of \eqref{eq:Bony1} gives rise to an integral which may only be bounded favorably when $\alpha \in (1/2,1)$. Indeed, from the H\"older inequality we obtain
  \begin{align}
    \sum_{k\geq j-1} \sum_{|k-l|\leq 2} \left| \int \Delta_j \nabla \cdot (\Delta_k u \Delta_l \theta) \Dj \theta |\Dj \theta|^{p-2} \right| &\leq C \Vert \Dj \theta \Vert_{L^p}^{p-2} 2^j \sum_{k\geq j-1} \sum_{|k-l|\leq 2} \Vert \Delta_k u \Vert_{L^p} \Vert \Delta_l \theta \Vert_{L^\infty} \notag\\
    & \leq C \Vert \Dj \theta \Vert_{L^p}^{p-2} 2^j \Vert \theta \Vert_{C^\alpha} \sum_{k\geq j-1} 2^{k(1-\alpha - \alpha_p)} 2^{k \alpha_p} \Vert \Delta_k \theta \Vert_{L^p}.\label{eq:CIW:4}
  \end{align}Given that $\theta \in \dot{B}^{\alpha_p}_{p,\infty}$, the sum of the right side of the above estimate is finite only if $\alpha + \alpha_p > 1$. The later holds if and only if $\alpha >1/2$ (and $p$ is large enough, depending on $\alpha$). However, if $\alpha \in (0,1/2)$ it seems that the method of \cite{ConstIyerWu} cannot be applied directly. We overcome this difficulty in Section~\ref{sec:proof:hard} below. In the case $\alpha\in (1/2,1)$, the right side of \eqref{eq:CIW:4} does remain bounded and the estimate on the nonlinear term may be summarized as
  \begin{align}\label{eq:CIW3}
    \left| \int |\Delta_j \theta|^{p-2} \Delta_j \theta \Delta_j (u\cdot \nabla \theta)\; dx \right| \leq C 2^{(2-2\alpha_p )j} \Vert \theta \Vert_{C^{\alpha_p}} \Vert \theta \Vert_{\BB{\alpha_p}{p}}.
  \end{align}Combining \eqref{eq:CIW1}--\eqref{eq:CIW3} with the Gr\"onwall inequality, and taking the supremum in $j$, we obtain that
  \begin{align}
  \theta \in L^{\infty}([t_1,\infty); \BB{2\alpha_p}{p}(\RRd))
   \end{align} for any $t_1>t_0$. Using the Besov embedding theorem we obtain that $\theta \in L^\infty([t_1,\infty);\BB{2\alpha - \eps_p}{\infty}(\RRd))$, for any $t_1>t_0$, where $\eps_p = (4\alpha+d)/p < (4+d)/p$. Letting $p> (4+d)/(2\alpha-1)$ concludes the proof of the theorem in the case $\alpha\in(1/2,1)$.

\section{Proof of the main theorem in the case $\alpha \in (0,1/2]$} \label{sec:proof:hard}\setcounter{equation}{0}

Let us fix $\II=[t_0,t_1]$, for some $0<t_0<t_1$. The following lemma gives the principal estimate needed in the proof of Theorem~\ref{thm:higher}.
\begin{lemma}\label{lemma:main}
  Let $\theta$ be a weak solution of \eqref{eq:N1}--\eqref{eq:N4} which is H\"older continuous, that is
  \begin{align}
    \theta \in L^\infty(\II;L^2(\RRd)) \cap L^2(\II;\dot{H}^{1}(\RRd)) \cap L^\infty(\II;C^\alpha(\RRd))
  \end{align}for some $\alpha \in (0,1/2)$. If additionally
  \begin{align}
    \theta \in L^2(\II;\dot{B}^{1}_{p,2}(\RRd)),
  \end{align}for some $p \geq 2$, then we have
  \begin{align}
    \theta \in \tilde{L}^{2}(\II;\dot{B}^{1}_{q,r}(\RRd))
  \end{align}for all $1\leq r \leq \infty$, and for all $q\in (p, m_\alpha p)$, where $m_\alpha = (1-\alpha)/(1-2\alpha) > 1$.
\end{lemma}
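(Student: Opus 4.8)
The plan is to run a frequency-localized energy estimate coupled with the parabolic smoothing at each dyadic shell, and to feed the nonlinearity into the Bony paraproduct decomposition in such a way that all three pieces reduce, after the smoothing gain, to one and the same geometric series in the frequency index. The bound $q < m_\alpha p$ will then appear as exactly the condition that this series converges.

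\emph{Step 1 (localized energy estimate and smoothing).} I apply $\Delta_j$ to \eqref{eq:N1}, test against $|\Delta_j\theta|^{q-2}\Delta_j\theta$, and invoke the lower bound on the dissipation as in \eqref{eq:CIW2} (with $q$ in place of $p$), to obtain
\[
\frac{d}{dt}\|\Delta_j\theta\|_{L^q} + \frac{2^{2j}}{C}\|\Delta_j\theta\|_{L^q}\le\|\Delta_j(u\cdot\nabla\theta)\|_{L^q}.
\]
Writing this in Duhamel form from $t_0$ and taking $L^2_t(\II)$ via Young's inequality (the kernel $e^{-c2^{2j}t}$ has $L^1_t$-mass $\sim 2^{-2j}$), I get
\[
2^j\|\Delta_j\theta\|_{L^2_t L^q}\lesssim \|\Delta_j\theta(t_0)\|_{L^q} + 2^{-j}\|\Delta_j(u\cdot\nabla\theta)\|_{L^2_t L^q}.
\]
The initial term is absorbed by running the estimate on a slightly enlarged interval and interpolating, so the whole matter reduces to controlling the sequence $2^{-j}\|\Delta_j(u\cdot\nabla\theta)\|_{L^2_t L^q}$ in $\ell^r_j$.

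\emph{Step 2 (paraproduct and the common exponent).} I insert the Bony decomposition \eqref{eq:Bony1}. Two ingredients drive every estimate. First, the frequency-localized Calder\'on--Zygmund/Bernstein relations $\|\Delta_k u\|_{L^a}\lesssim 2^k\|\Delta_k\theta\|_{L^a}$ for $a\in(1,\infty)$ and $\|\Delta_k u\|_{L^\infty}\lesssim 2^{k(1-\alpha)}\|\theta\|_{C^\alpha}$. Second, the interpolation $\|\Delta_k\theta\|_{L^q}\lesssim\|\Delta_k\theta\|_{L^p}^{p/q}\|\Delta_k\theta\|_{L^\infty}^{1-p/q}\lesssim 2^{-k\alpha(1-p/q)}\|\Delta_k\theta\|_{L^p}^{p/q}$, which converts the hypothesis $\theta\in L^2(\II;\dot B^1_{p,2})$ (so that $b_k:=2^k\|\Delta_k\theta\|_{L^2_t L^p}\in\ell^2_k$) together with the $C^\alpha$ bound into $L^q$-information (the short time integrability loss $L^2_t\hookrightarrow L^{2p/q}_t$ is harmless on the finite interval $\II$). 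Carrying these through, each of the three pieces --- low-high, high-low, and high-high --- produces a contribution of the form
\[
2^{-j}\|\Delta_j(\cdots)\|_{L^2_t L^q}\lesssim \sum_{k}2^{ke}\,b_k^{p/q},\qquad e=(1-2\alpha)-\tfrac{p}{q}(1-\alpha),
\]
the sum ranging over $|k-j|\le 2$ for the first two pieces and over $k\ge j-1$ for the high-high piece.

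\emph{Step 3 (summation) and the main obstacle.} For the low-high and high-low pieces the sum is finite, so only the size of $e$ at $k\approx j$ matters; but for the high-high piece the infinite tail $\sum_{k\ge j-1}2^{ke}b_k^{p/q}$ converges if and only if $e<0$, i.e. $\tfrac{p}{q}(1-\alpha)>1-2\alpha$, equivalently $q<\tfrac{1-\alpha}{1-2\alpha}\,p=m_\alpha p$. Under this condition the high-frequency part of $2^j\|\Delta_j\theta\|_{L^2_t L^q}$ decays like $2^{je}$ (using $b_k\in\ell^\infty$ to sum the tail geometrically), while its low-frequency part decays like $2^j$ by the homogeneous weight together with a crude $L^2$--$L^\infty$ interpolation bound; hence $\{2^j\|\Delta_j\theta\|_{L^2_t L^q}\}_{j\in\mathbb Z}\in\ell^r(\mathbb Z)$ for every $r\in[1,\infty]$, which is precisely $\theta\in\tilde L^2(\II;\dot B^1_{q,r})$. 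The hard part is the high-high term: it is the only piece demanding summation over the infinite tail $k\ge j-1$, so the sign of $e$ is genuinely decisive there, and this is exactly what imposes the ceiling $m_\alpha p$. The delicate point is to split the product so that the Besov hypothesis (giving $L^p$) and the Hölder bound (giving $L^\infty$) combine to yield the exponent $e$; alternative splittings, such as a naive Bernstein embedding $L^2\to L^q$ or placing $u$ in $L^p$, lead to strictly worse, dimension-dependent ranges for $q$.
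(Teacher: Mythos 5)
Your proposal is correct and follows essentially the same route as the paper: the frequency-localized $L^q$ energy estimate with the generalized Bernstein lower bound on the dissipation, Duhamel plus Young's inequality in time, the Bony decomposition combined with $L^p$--$L^\infty$ interpolation against the $C^\alpha$ bound, and convergence of the high-high tail $\sum_{k\geq j-1}2^{ke}$ forcing $e<0$, i.e. $q<m_\alpha p$. Your common exponent $e=(1-2\alpha)-\tfrac{p}{q}(1-\alpha)$ agrees exactly with the paper's (note $1-\alpha-p/q-\alpha(1-p/q)=e$ in the bound for $J_3$); the only cosmetic differences are that the paper places $S_{k-1}u$ in $L^q$ (producing an extra, convergent sum over low frequencies) where you place it in $L^\infty$, and that it handles the initial-data term by direct interpolation $\Vert \Delta_j\theta(t_0)\Vert_{L^q}\leq\Vert\Delta_j\theta(t_0)\Vert_{L^p}^{p/q}\Vert\Delta_j\theta(t_0)\Vert_{L^\infty}^{1-p/q}$ rather than by enlarging the interval.
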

\begin{proof}
  Apply $\Dj$ to \eqref{eq:N1}, multiply by $\Dj \theta |\Dj \theta|^{q-2}$, integrate over $\RRd$, and use~\cite[Proposition 29.1]{Lemarie} (cf.~\cite{ChenMiaoZhang,Planchon,Wu}), to obtain
 \begin{align}\label{eq:ODE1}
    \frac{1}{q} \frac{d}{dt} \Vert \Dj \theta \Vert_{L^q}^q + c  2^{2j} \Vert \Dj \theta \Vert_{L^q}^q &\leq \left| \int \Dj(u \cdot \nabla \theta) \Dj \theta |\Dj \theta|^{q-2}\right|
 \end{align}for some constant $c=c(d,q)>0$. Using the Bony paraproduct decomposition and the divergence free nature of $u$ (and hence of $S_{k-1} u$ and $\Delta_k u$) we write
 \begin{align}\label{eq:Bony}
  \Dj(u \cdot \nabla \theta) = \sum_{|j-k|\leq 2} \Delta_j \nabla \cdot (S_{k-1} u \Delta_k \theta) + \sum_{|j-k|\leq 2} \Delta_j (\Delta_k u \cdot \nabla S_{k-1} \theta) + \sum_{k\geq j-1} \sum_{|k-l|\leq 2} \Delta_j \nabla \cdot (\Delta_k u \Delta_l \theta).
 \end{align}From  the H\"older inequality, \eqref{eq:ODE1}, and \eqref{eq:Bony} we hence obtain
 \begin{align}
    \frac{1}{q} \frac{d}{dt} \Vert \Dj \theta \Vert_{L^q}^q + c  2^{2j} \Vert \Dj \theta \Vert_{L^q}^q \leq C \Vert \Dj \theta \Vert_{L^q}^{q-1} \left( J_1 + J_2 + J_3\right) \label{eq:lemma:1}
 \end{align}where $c=c(d,p)>0$ is a sufficiently small constant, and we have denoted
 \begin{align}
   J_1 &=  \sum_{|j-k|\leq 2}  \Vert \Dj \nabla \cdot (S_{k-1}u\, \Delta_k \theta)\Vert_{L^q} \label{eq:J1:def}\\
   J_2 &=  \sum_{|j-k|\leq 2}  \Vert \Dj (\Delta_k u \cdot \nabla S_{k-1} \theta)\Vert_{L^q} \label{eq:J2:def} \\
   J_3 &=  \sum_{k \geq j-1} \sum_{|k-l|\leq 2} \Vert \Dj \nabla \cdot (\Delta_k u\, \Delta_l \theta)\Vert_{L^q}\label{eq:J3:def}.
 \end{align}We bound $J_1$ using the Bernstein inequality, the boundedness of Calder\'on-Zygmund operators on $L^q$, and the triangle inequality $\Vert S_{k-1} u \Vert_{L^q} \leq \sum_{l < k-1} \Vert \Delta_l u \Vert_{L^q}$ for all $q \in [1, \infty]$ (note that $\hat{u}(0)=0$), to obtain
 \begin{align}
   J_1 &\leq C 2^j \sum_{|j-k|\leq 2} \sum_{l < k-1} \Vert \Delta_l u \Vert_{L^q} \Vert \Delta_k \theta \Vert_{L^\infty}\notag\\
   &\leq C 2^j \sum_{|j-k|\leq 2} 2^{-k\alpha}\left( 2^{k \alpha}\Vert \Delta_k \theta \Vert_{L^\infty}\right) \sum_{l < k-1} 2^{l} \Vert \Delta_l \theta \Vert_{L^q}\notag\\
   &\leq C \Vert \theta \Vert_{C^\alpha} 2^j \sum_{|j-k|\leq 2} 2^{-k\alpha} \sum_{l< k-1} 2^{l} \left( 2^{l} \Vert \Delta_l \theta \Vert_{L^p}\right)^{p/q} \left( 2^{l\alpha} \Vert \Delta_l \theta \Vert_{L^\infty}\right)^{1-p/q} 2^{-l(p/q + \alpha(1-p/q))}\notag\\
   &\leq C \Vert \theta \Vert_{C^\alpha}^{2-p/q} 2^j \sum_{|j-k|\leq 2} 2^{-k\alpha} \sum_{l< k-1} 2^{l(1-p/q - \alpha(1-p/q))} \left( 2^{l} \Vert \Delta_l \theta \Vert_{L^p}\right)^{p/q} \notag\\
   &\leq C \Vert \theta \Vert_{C^\alpha}^{2-p/q} 2^{j(1-\alpha)} \sum_{l\leq j} 2^{l(1-p/q - \alpha(1-p/q))} \left( 2^{l} \Vert \Delta_l \theta \Vert_{L^p}\right)^{p/q} \label{eq:J1:bound}.
 \end{align}In the above estimate we also used the interpolation inequality $\Vert f \Vert_{L^q} \leq \Vert f \Vert_{L^p}^{p/q} \Vert f \Vert_{L^\infty}^{1-p/q}$, which holds for all functions $f \in L^p \cap L^\infty$, and any $q\in (p,\infty)$. Note that since $\alpha < 1$ we have
 \begin{align}
   1-\frac pq - \alpha \left( 1- \frac pq \right) > 0 \Leftrightarrow 1- \frac pq > 0 \Leftrightarrow q>p,
 \end{align} so that for all $q>p$ and all $s\in[1,\infty)$ we have
 \begin{align}
   \left( \sum_{l\leq j} 2^{s l(1-p/q - \alpha(1-p/q))}\right)^{1/s} \leq C  2^{j(1-p/q - \alpha(1-p/q))}. \label{eq:sum:1}
 \end{align}We bound $J_2$ similarly,
 \begin{align}
   J_2 &\leq C \sum_{|j-k|\leq 2} \sum_{l < k-1} \Vert \Delta_k u \Vert_{L^q} \Vert \nabla \Delta_l \theta \Vert_{L^\infty}\notag\\
   &\leq C \sum_{|j-k|\leq 2} 2^k \Vert \Delta_k \theta \Vert_{L^q} \sum_{l < k-1}2^l \Vert \Delta_l \theta \Vert_{L^\infty}\notag\\
   &\leq C \sum_{|j-k|\leq 2} 2^k \Vert \Delta_k \theta \Vert_{L^p}^{p/q} \Vert \Delta_k \theta \Vert_{L^\infty}^{1-p/q}  \sum_{l < k-1} 2^{l(1-\alpha)} \left( 2^{l\alpha} \Vert \Delta_l \theta \Vert_{L^\infty}\right) \notag\\
   &\leq C \Vert \theta \Vert_{C^\alpha} \sum_{|j-k|\leq 2} 2^k \left( 2^k \Vert \Delta_k \theta \Vert_{L^p}\right)^{p/q} \left( 2^{k \alpha} \Vert \Delta_k \theta \Vert_{L^\infty}\right)^{1-p/q}  2^{-k(p/q + \alpha(1-p/q))} 2^{k(1-\alpha)}\notag\\
   &\leq C \Vert \theta \Vert_{C^\alpha}^{2-p/q} 2^{j(2-\alpha-p/q-\alpha(1-p/q))} \sum_{|j-k|\leq 2} \left( 2^k \Vert \Delta_k \theta \Vert_{L^p}\right)^{p/q}.\label{eq:J2:bound}
 \end{align}Note that here we  used $\alpha <1$ to obtain that $\sum_{l < k-1} 2^{l(1-\alpha)} \leq C 2^{k(1-\alpha)}$. Lastly, we bound $J_3$ as
 \begin{align}
    J_3 &\leq C 2^{j} \sum_{k\geq j-1} \sum_{|k-l|\leq 2} \Vert \Delta_k u \Vert_{L^q} \Vert \Delta_l \theta \Vert_{L^\infty}\notag\\
    &\leq C 2^j \sum_{k\geq j-1} 2^k \Vert \Delta_k \theta \Vert_{L^q} \sum_{|k-l|\leq2} 2^{-l\alpha} \left( 2^{l\alpha} \Vert \Delta_l \theta \Vert_{L^\infty}\right)\notag\\
    &\leq C 2^{j} \Vert \theta \Vert_{C^\alpha} \sum_{k\geq j-1} 2^{k(1-\alpha)} \Vert \Delta_k \theta \Vert_{L^p}^{p/q} \Vert \Delta_k \theta \Vert_{L^\infty}^{1-p/q}\notag\\
    &\leq C 2^{j} \Vert \theta \Vert_{C^\alpha} \sum_{k\geq j-1} 2^{k(1-\alpha)} \left( 2^k \Vert \Delta_k \theta \Vert_{L^p}\right)^{p/q} \left( 2^{k \alpha} \Vert \Delta_k \theta \Vert_{L^\infty}\right)^{1-p/q} 2^{-k(p/q + \alpha(1-p/q))}\notag\\
    &\leq C 2^{j} \Vert \theta \Vert_{C^\alpha}^{2-p/q} \sum_{k\geq j-1} 2^{k (1 - \alpha - p/q - \alpha(1-p/q))} \left( 2^k \Vert \Delta_k \theta \Vert_{L^p}\right)^{p/q}.\label{eq:J3:bound}
  \end{align}Here as before we used the Bernstein's inequality and the bound $\Vert \Delta_k u \Vert_{L^q} \leq C 2^k \Vert \Delta_k \theta \Vert_{L^q}$. If we let
  \begin{align}
   p <  q < m_\alpha p = \frac{1-\alpha}{1-2\alpha} p,
  \end{align}the exponent of $2^k$ in the last inequality of \eqref{eq:J3:bound} lies in the range
  \begin{align}
  -\alpha < 1- \alpha - \frac pq -\alpha \left(1- \frac pq\right)<  1 - \frac{1}{m_\alpha} - \alpha \left( 2 - \frac{1}{m_\alpha}\right) = 0
  \end{align}since $\alpha \in(0,1/2)$. Therefore, if $q\in(p,m_\alpha p)$, for any $s\in[1,\infty)$  we have
   \begin{align}
     \left( \sum_{k\geq j-1}  2^{s k (1 - \alpha - p/q - \alpha(1-p/q))} \right)^{1/s} \leq C 2^{j (1 - \alpha - p/q - \alpha(1-p/q))}.\label{eq:sum:2}
   \end{align}We insert the bounds \eqref{eq:J1:bound}, \eqref{eq:J2:bound}, and \eqref{eq:J3:bound} into \eqref{eq:lemma:1} and obtain the a priori estimate
  \begin{align}
    \frac{d}{dt} \Vert \Dj \theta \Vert_{L^q} + c  2^{2j} \Vert \Dj \theta \Vert_{L^q} & \leq C \Vert \theta \Vert_{C^\alpha}^{2-p/q} 2^{j(1-\alpha)} \sum_{k\leq j} 2^{k(1-p/q - \alpha(1-p/q))} \left( 2^{k} \Vert \Delta_k \theta \Vert_{L^p}\right)^{p/q} \notag\\
    &\qquad + C \Vert \theta \Vert_{C^\alpha}^{2-p/q} 2^{j(2-\alpha-p/q-\alpha(1-p/q))} \sum_{|j-k|\leq 2} \left( 2^k \Vert \Delta_k \theta \Vert_{L^p}\right)^{p/q}\notag\\
    &\qquad + C \Vert \theta \Vert_{C^\alpha}^{2-p/q} 2^{j} \sum_{k\geq j-1} 2^{k (1 - \alpha - p/q - \alpha(1-p/q))} \left( 2^k \Vert \Delta_k \theta \Vert_{L^p}\right)^{p/q}\label{eq:lemma:2}.
  \end{align}We apply Gr\"onwall's inequality and obtain
  \begin{align}
    \Vert \Dj \theta(t) \Vert_{L^q} &\leq e^{-c 2^{2j} (t-t_0)} \Vert \Dj \theta(t_0) \Vert_{L^q}\notag\\
   &\qquad + C \Vert \theta \Vert_{L^\infty(\II; C^\alpha)}^{2-p/q} 2^{j(1-\alpha)}  \sum_{k \leq j} 2^{k(1 - p/q - \alpha(1-p/q))} \int_{t_0}^{t} e^{-c (t-s) 2^{2j}}  \left( 2^{k} \Vert \Delta_k \theta(s) \Vert_{L^p}\right)^{p/q}  ds\notag\\
   &\qquad + C \Vert \theta \Vert_{L^\infty(\II; C^\alpha)}^{2-p/q} 2^{j(2-\alpha-p/q-\alpha(1-p/q))}   \sum_{|k -j|\leq 2}  \int_{t_0}^{t} e^{-c (t-s) 2^{2j}}  \left( 2^{k} \Vert \Delta_k \theta(s) \Vert_{L^p}\right)^{p/q}  ds\notag\\
   &\qquad + C \Vert \theta \Vert_{L^\infty(\II; C^\alpha)}^{2-p/q} 2^{j}  \sum_{k \geq j-1} 2^{k(1 - \alpha- p/q - \alpha(1-p/q))} \int_{t_0}^{t} e^{-c (t-s) 2^{2j}}  \left( 2^{k} \Vert \Delta_k \theta(s) \Vert_{L^p}\right)^{p/q}  ds.\label{eq:lemma:3}
  \end{align}Using the Young-type inequality
  \begin{align}
  \Vert f\ast g \Vert_{L^2(\II)}  \leq  \Vert f \Vert_{L^{1}(\II)} \Vert g \Vert_{L^{2}(\II)}  \leq  \Vert f \Vert_{L^{1}(\II)} \Vert g \Vert_{L^{2q/p}(\II)} |\II|^{(q-p)/2q}\label{eq:lemma:Young}
  \end{align}and the bound
  \begin{align}
  \left\Vert e^{-c 2^{2j} (t-t_0)} \right\Vert_{L^r(\II)} \leq \min\{ C 2^{-2j/r}, |\II|^{1/r} \} \label{eq:kernel:Lr}
\end{align}with $r=1$, we obtain that
\begin{align}
  \left \Vert \int_{t_0}^{t} e^{-c (t-s) 2^{2j}}  \left( 2^{k} \Vert \Delta_k \theta(s) \Vert_{L^p}\right)^{p/q}  ds \right\Vert_{L^2(\II)} \leq  C \min\{2^{-2j}, |\II|\} \left( 2^k \Vert \Delta_k \theta \Vert_{L^2(\II;L^p)}\right)^{p/q} |\II|^{(q-p)/2q}. \label{eq:convolution:L2}
\end{align}We take the $L^2(\II)$ norm of \eqref{eq:lemma:3}, use the bound \eqref{eq:convolution:L2} above, combined with the discrete H\"older inequality, the fact that $\theta \in L^2(\II;\dot{B}^{1}_{p,2})=\tilde{L}^2(\II;\dot{B}^{1}_{p,2})$, the estimates \eqref{eq:sum:1} and \eqref{eq:sum:2} with $s=2q/(2q-p)$, and obtain that
\begin{align}
\Vert \Dj \theta(t) \Vert_{L^2(\II;L^q)} &\leq C \Vert \Dj \theta(t_0) \Vert_{L^p}^{p/q} \Vert \Dj \theta(t_0) \Vert_{L^\infty}^{1-p/q} \min\{2^{-j},|\II|^{1/2}\}\notag\\
& \ \ \ \ + C \Vert \theta \Vert_{L^\infty(\II; C^\alpha)}^{2-p/q} \Vert \theta \Vert_{L^2(\II;\dot{B}^{1}_{p,2})}^{p/q} |\II|^{(q-p)/2q}\left( 2^{j(2-\alpha-p/q-\alpha(1-p/q))}  \min\{ C 2^{-2j}, |\II|\}\right)\notag\\
&\leq C \Vert \theta(t_0) \Vert_{L^p}^{p/q} \Vert \theta(t_0) \Vert_{C^\alpha}^{1-p/q} \left( 2^{-j\alpha (1-p/q)} \min\{2^{-j},|\II|^{1/2}\}\right)\notag\\
& \ \ \ \ + C \Vert \theta \Vert_{L^\infty(\II; C^\alpha)}^{2-p/q} \Vert \theta \Vert_{L^2(\II;\dot{B}^{1}_{p,2})}^{p/q} |\II|^{(q-p)/2q} \left(2^{j(2-\alpha-p/q-\alpha(1-p/q))}  \min\{ C 2^{-2j}, |\II|\}\right)\label{eq:lemma:4}
\end{align}
for all $q\in(p,m_\alpha p)$. Note that $\theta(t_0) \in L^p$ since we a priori have $\theta \in L_t^\infty L_x^2 \cap
L_t^\infty L_x^\infty$. We multiply the above estimate on both sides by $2^{j}$ and take an $\ell^r({\mathbb Z})$-norm, to obtain that
  \begin{align}
    \Vert \theta \Vert_{\tilde{L}^2(\II;\dot{B}^{1}_{q,r})} &= \left\Vert 2^{j} \Vert \Dj \theta \Vert_{L^2(\II;L^q)} \right\Vert_{\ell^r({\mathbb Z})} \notag\\
    & \leq \Vert \theta(t_0) \Vert_{L^p}^{p/q} \Vert \theta(t_0) \Vert_{C^\alpha}^{1-p/q} \left\Vert 2^{j(1-\alpha (1-p/q))} \min\{2^{-j},|\II|^{1/2}\} \right\Vert_{\ell^r({\mathbb Z})}\notag\\
    &\ \ \ \ +C \Vert \theta \Vert_{L^\infty(\II; C^\alpha)}^{2-p/q} \Vert \theta \Vert_{L^{2}(\II;\dot{B}^{1}_{p,2})}^{p/q} |\II|^{(q-p)/2q} \left\Vert 2^{j(3-\alpha-p/q-\alpha(1-p/q))}  \min\{ C 2^{-2j}, |\II|\} \right\Vert_{\ell^r({\mathbb Z})}.\label{eq:lemma:5}
  \end{align}The key observation is that for all $q\in(p,m_\alpha p)$ we have  $- \alpha(1-p/q) < 0$, $1-\alpha(1-p/q)>0$, $1 - \alpha - p/q - \alpha(1-p/q))<0$, and $3 - \alpha - p/q - \alpha(1-p/q))>0$, so that the two $\ell^r$ norms on the right side of the above estimate are finite, for any $1\leq r \leq \infty$. We have hence proven that
  \begin{align}
    \theta \in \tilde{L}^{2}(\II;\dot{B}^1_{q,r})
  \end{align}for any $1\leq r \leq \infty$, and any $q \in (p, m_\alpha p)$, concluding the proof of the lemma.
\end{proof}
The following lemma shows how one may bootstrap the arguments in Lemma~\ref{lemma:main} in order to control $\theta$ in $L_t^2 W_x^{1,\infty}$.
\begin{lemma}\label{lemma:bootstrap}
  Let $\theta$ be a weak solution of \eqref{eq:N1}--\eqref{eq:N4} which is H\"older continuous, that is
  \begin{align}
    \theta \in L^\infty(\II;L^2(\RRd)) \cap L^2(\II;\dot{H}^{1}(\RRd)) \cap L^\infty(\II;C^\alpha(\RRd))\label{eq:bootstrap:cond}
  \end{align}for some $\alpha \in (0,1/2)$. Then we have
  \begin{align}
    \nabla \theta \in L^2(\II;L^{\infty}(\RRd)).
  \end{align}
\end{lemma}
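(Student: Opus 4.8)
The plan is to first bootstrap the integrability index using Lemma~\ref{lemma:main}, and then close with one additional energy estimate that upgrades the smoothness index slightly. Since \eqref{eq:bootstrap:cond} gives $\theta\in L^2(\II;\dot H^1)=L^2(\II;\dot B^1_{2,2})$, I would start the iteration from $p_0=2$. Applying Lemma~\ref{lemma:main} with $r=2$ and recalling that $\tilde L^2(\II;\dot B^1_{q,2})=L^2(\II;\dot B^1_{q,2})$, the output re-enters the hypothesis of the lemma with a strictly larger integrability exponent: any $p_1\in(p_0,m_\alpha p_0)$ is admissible. Iterating, I obtain exponents $p_n$ with $p_{n+1}$ as close as desired to $m_\alpha p_n$; since $m_\alpha=(1-\alpha)/(1-2\alpha)$ is a fixed constant strictly greater than $1$, the $p_n$ grow geometrically, so after finitely many steps $\theta\in L^2(\II;\dot B^1_{p,2})$ for any prescribed large finite $p$. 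This part uses Lemma~\ref{lemma:main} only as a black box.

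The main obstacle is that Lemma~\ref{lemma:main} improves only the integrability index and never the smoothness index, which stays pinned at $1$. Controlling $\nabla\theta$ in $L^\infty_x$ from an $L^q_x$-based block estimate costs a factor $2^{jd/q}$ by Bernstein, so at smoothness exactly $1$ the high-frequency sum $\sum_j 2^{j(1+d/q)}\|\Delta_j\theta\|_{L^q}$ diverges for every finite $q$; one would need either $q=\infty$, which is impossible here since the high-high term $J_3$ forces $q<m_\alpha p$, or smoothness strictly above $1$. I would therefore re-run the argument of Lemma~\ref{lemma:main}, namely the estimates \eqref{eq:ODE1}--\eqref{eq:lemma:5}, but now reading off the output in $\dot B^{1+\eta}_{q,1}$ instead of $\dot B^1_{q,r}$, carrying an extra weight $2^{j\eta}$ through every frequency sum. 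The crux is that the admissibility constraints leave room for a genuine gain: writing $\rho=p/q\in(1/m_\alpha,1)$, the high-frequency cutoff coming from $J_3$ (and from the $\min\{2^{-2j},|\II|\}$ smoothing factor) reads $\eta<\rho(1-\alpha)-(1-2\alpha)$, while the free-evolution term carrying the $C^\alpha$ bound on $\theta(t_0)$ imposes $\eta<\alpha(1-\rho)$. These two thresholds cross at $\rho=1-\alpha$ (which indeed lies in $(1/m_\alpha,1)$, since $(1-\alpha)^2>1-2\alpha$), where they share the common value $\alpha^2>0$.

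I would thus fix $\eta=\alpha^2/2$ and $q=p/(1-\alpha)$, so that $p<q<m_\alpha p$, and choose $p$ large enough in the first step that $d/q<\eta$. The re-run then yields $\theta\in\tilde L^2(\II;\dot B^{1+\eta}_{q,1})$. To finish I split $\nabla\theta=\sum_j \nabla\Delta_j\theta$ at frequency $j=0$. For $j\geq 0$, Bernstein gives $\|\nabla\Delta_j\theta\|_{L^\infty_x}\leq C2^{j(1+d/q)}\|\Delta_j\theta\|_{L^q_x}\leq C2^{j(1+\eta)}\|\Delta_j\theta\|_{L^q_x}$ because $d/q\leq\eta$, and summing against the $\tilde L^2(\II;\dot B^{1+\eta}_{q,1})$ bound controls the high-frequency part in $L^2(\II;L^\infty)$. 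For $j<0$, the $C^\alpha$ bound gives $2^j\|\Delta_j\theta\|_{L^\infty_x}\leq C2^{j(1-\alpha)}\|\theta\|_{C^\alpha}$, and $\sum_{j<0}2^{j(1-\alpha)}<\infty$ since $\alpha<1$, controlling the low-frequency part. Together these yield $\nabla\theta\in L^2(\II;L^\infty)$, as claimed. The delicate point throughout is the bookkeeping of the three competing frequency constraints in the re-run; everything else is a routine repetition of the computations already carried out in Lemma~\ref{lemma:main}.
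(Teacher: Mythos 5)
Your proposal is correct and follows essentially the same route as the paper's proof: iterate Lemma~\ref{lemma:main} from $p=2$ to reach an arbitrarily large integrability index, then re-run the main estimate with an extra weight $2^{j\eta}$ to gain a positive amount of smoothness above $1$, choosing $p$ so large that the Bernstein/Besov-embedding loss $d/q$ is absorbed by $\eta$, and finally control the low frequencies by the a priori bounds. The only (immaterial) differences are your specific choices $q=p/(1-\alpha)$ and $\eta=\alpha^2/2$ in place of the paper's $q=p(1+m_\alpha)/2$ and $\epsilon_\alpha$, and your use of the $C^\alpha$ bound rather than the $L^2$ bound for the low-frequency block; your identification of the two binding constraints $\eta<\alpha(1-p/q)$ and $\eta<(1-\alpha)p/q-(1-2\alpha)$ matches the paper's conditions exactly.
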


\begin{proof}
We note that $\dot{H}^{1} = \dot{B}^{1}_{2,2}$ so that we may apply Lemma~\ref{lemma:main} with $p=2$, and obtain that $\theta \in L^2(\II;\dot{B}^{1}_{q,2})$ for any $q\in(2,2m_\alpha)$. Since $m_\alpha>1$, we may bootstrap and apply Lemma~\ref{lemma:main} once more to obtain that $\theta \in L^2(\II;\dot{B}^{1}_{q,2})$ for all $q\in(2,2m_\alpha^2)$. For any fixed $p>2$, since $m_\alpha^k$ diverges as $k\rightarrow \infty$, we may iterate Lemma~\ref{lemma:main} finitely many times and obtain that $\theta \in \tilde{L}^{2}(\II;\dot{B}^1_{p,r})$, for all $r\in [1,\infty]$.

Fix $p$ large enough, to be explicitly chosen later, and let $q = p (1+m_\alpha)/2$. From the estimate~\eqref{eq:lemma:4}, for any $\epsilon>0$ we have
\begin{align}
 & 2^{j(1+\epsilon)} \Vert \Dj \theta \Vert_{L^2(\II;L^q)}\notag\\
 & \qquad \leq C \Vert \theta(t_0) \Vert_{L^p}^{p/q} \Vert\theta(t_0) \Vert_{C^\alpha}^{1-p/q} \min\{C 2^{j(\epsilon - \alpha(1-p/q))},|\II|^{1/2} 2^{j(1+\epsilon - \alpha(1-p/q))}\} \notag\\
 &\qquad + C \Vert \theta \Vert_{L^\infty(\II; C^\alpha)}^{2-p/q} \Vert \theta \Vert_{L^{2}(\II;\dot{B}^{1}_{p,2})}^{p/q} |\II|^{(q-p)/2q} \min\{C 2^{j(\epsilon + 1 - p/q - \alpha(2-p/q))},2^{j(\epsilon + 3 - p/q - \alpha(2-p/q))} |\II|\}\label{eq:mainestimate3}
\end{align}where $q = p (1+m_\alpha)/2$. We now pick a suitable $\epsilon>0$, so that the $\ell^1({\mathbb Z})$ norm of the right side of \eqref{eq:mainestimate3} is finite. For this to hold, we need that the following four bounds to hold true
\begin{align}
  \epsilon - \alpha\left(1- \frac{2}{1+m_\alpha} \right) &< 0 \Leftrightarrow \epsilon < \frac{\alpha^2}{2-3\alpha}\label{eq:cond1}\\
  1+\epsilon - \alpha\left(1- \frac{2}{1+m_\alpha} \right)&>0 \Leftrightarrow \epsilon > -\frac{2- 3 \alpha - \alpha^2}{2-3\alpha}\label{eq:cond2}\\
  \epsilon + 1 - \frac{2}{m_\alpha}  - \alpha \left(2- \frac{2}{1+m_\alpha} \right) &<0 \Leftrightarrow \epsilon< \frac{(1-2\alpha)(2-3\alpha -\alpha^2)}{(1-\alpha)(2-3\alpha)}\label{eq:cond3}\\
  \epsilon + 3 - \frac{2}{m_\alpha}  - \alpha\left(2- \frac{2}{1+m_\alpha} \right) &>0 \Leftrightarrow \epsilon>- \frac{2 - 3 \alpha + \alpha^2 -2\alpha^3}{(1-\alpha)(2-3\alpha)}\label{eq:cond4}
\end{align}where we used that $p/q = 2/(1+m_\alpha)$. Note that $2-3\alpha -\alpha^2>0$ and $2 - 3 \alpha + \alpha^2 -2\alpha^3>0$ whenever $0<\alpha < 1/2$, so that we must choose $\epsilon$ such that only \eqref{eq:cond1} and \eqref{eq:cond3} hold. It is therefore sufficient to let
\begin{align}
  \epsilon_\alpha = \frac{1}{2} \min\left\{ \frac{\alpha^2}{2-3\alpha}, \frac{(1-2\alpha)(2-3\alpha -\alpha^2)}{(1-\alpha)(2-3\alpha)}\right\}.
\end{align}It can be easily checked that for any $\alpha\in(0,1/2)$ we have $\epsilon_\alpha>0$. With this choice of $\epsilon=\epsilon_\alpha$ we may take the $\ell^1$ norm of \eqref{eq:mainestimate3} and obtain that
\begin{align}
  \theta \in \tilde{L}^2(\II;\dot{B}^{1+\epsilon_\alpha}_{p(1+m_\alpha)/2,1}) \subset L^2(\II;\dot{B}^{1+\epsilon_\alpha}_{p(1+m_\alpha)/2,1}).
\end{align}The Besov embedding theorem $\dot{B}^{s}_{p,1} \subset \dot{B}^{s-d/p}_{\infty,1}$ gives that
\begin{align}
  \dot{B}^{1+\epsilon_\alpha}_{p(1+m_\alpha)/2,1} \subset \dot{B}^{1+\epsilon_\alpha - 2d/(p + p m_\alpha)}_{\infty,1},\label{eq:u:Besov}
\end{align}so that choosing $p=p(\alpha,d)>2$ to satisfy
\begin{align}
  \epsilon_\alpha - \frac{2d}{p(1 +  m_\alpha)} = 0 \label{eq:pchoice}
\end{align}we obtain
\begin{align}
 \nabla \theta \in L^2(\II;\dot{B}^{0}_{\infty,1}).\label{eq:dotbesov}
\end{align}We note that we may explicitly solve for $p$ in \eqref{eq:pchoice}
\begin{align}
  p = \frac{2d}{\epsilon_\alpha(1+m_\alpha)} \geq 4d > 2
\end{align}for any $\alpha \in (0,1/2)$.  We recall from \eqref{eq:bootstrap:cond} that $\nabla \theta \in L^2(\II;L^2)$, and hence  $\nabla \theta \in L^2(\II; L^2 \cap \dot{B}^{0}_{\infty,1})$, by \eqref{eq:dotbesov}. Lastly, we use $L^2 \cap \dot{B}^{0}_{\infty,1} \subset B^{0}_{\infty,1}$ and the borderline Sobolev embedding theorem
\begin{align}
{B}^{0}_{\infty,1} \subset L^\infty
\end{align}to obtain that
\begin{align}
 \nabla \theta \in L^2(\II;L^{\infty})
\end{align}which concludes the proof of the lemma. Note that by \eqref{eq:u:Besov} we may even obtain that $u\in L^2(\II; L^\infty)$.
\end{proof}

A simple consequence of this improved regularity is the following statement.
\begin{proposition}\label{prop:main}
  Let $\theta$ be a H\"older continuous weak solution of \eqref{eq:N1}--\eqref{eq:N4} such that
  \begin{align}
    \nabla \theta \in L^2([t_0,t_1];L^\infty).
  \end{align}Then
  \begin{align}
    \theta \in L^\infty([t_2,t_1];\dot{H}^m)
  \end{align}for any $m\geq 2$, and a.e. $t_2 \in (t_0,t_1)$.
\end{proposition}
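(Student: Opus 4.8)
The plan is to run an energy estimate in the homogeneous Sobolev space $\dot{H}^m$, close it by a Gr\"onwall argument using that $\nabla\theta\in L^2(\II;L^\infty)$, and bootstrap in $m$. Writing $\Lambda=(-\Delta)^{1/2}$, I would apply $\Lambda^m$ to \eqref{eq:N1}, pair with $\Lambda^m\theta$ in $L^2$, and integrate by parts to get
\begin{align}
\frac12 \frac{d}{dt} \Vert \theta \Vert_{\dot{H}^m}^2 + \Vert \theta \Vert_{\dot{H}^{m+1}}^2 = - \int \Lambda^m (u \cdot \nabla \theta)\, \Lambda^m \theta\, dx.
\end{align}
Since $\nabla\cdot u = 0$, the top-order transport contribution cancels, $\int (u\cdot\nabla\Lambda^m\theta)\,\Lambda^m\theta\, dx = 0$, so the right-hand side equals $-\int [\Lambda^m, u\cdot\nabla]\theta\,\Lambda^m\theta\, dx$, a commutator paired against $\Lambda^m\theta$.

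The central estimate I would establish is the Kato--Ponce--Moser type bound
\[
\left| \int [\Lambda^m, u\cdot\nabla]\theta\, \Lambda^m\theta\, dx \right| \leq C\, \Vert \nabla\theta \Vert_{L^\infty}\, \Vert \theta \Vert_{\dot{H}^{m+1}}\, \Vert \theta \Vert_{\dot{H}^m}.
\]
The endpoint terms of the Leibniz/paraproduct expansion in which all derivatives fall on $u$ produce $\Vert u\Vert_{\dot{H}^m}\Vert\nabla\theta\Vert_{L^\infty}$; using $u_j=\partial_i T_{ij}\theta$ from \eqref{eq:N2} and the $\dot{H}^m$-boundedness of the Calder\'on--Zygmund operators, $\Vert u\Vert_{\dot{H}^m}\lesssim\Vert\theta\Vert_{\dot{H}^{m+1}}$, so these are already of the desired form. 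The \emph{main obstacle}, and the only delicate point, is the family of intermediate terms $\int \partial^\beta u\,\partial^{\gamma-\beta}\nabla\theta\,\partial^\gamma\theta$ with $1\leq|\beta|\leq m-1$ and $|\gamma|=m$: handled naively these would demand $D^2\theta\in L^\infty$ (equivalently $\nabla u\in L^\infty$), which is \emph{not} available. I would resolve this by Gagliardo--Nirenberg interpolation, exploiting that the drift $u$ lives exactly at the level of $\nabla\theta$. A scaling count shows that each such trilinear integral has precisely the same homogeneity as the dissipation $\Vert\theta\Vert_{\dot{H}^{m+1}}^2$; interpolating the two interior factors between the $\nabla\theta$-level norm $\Vert\nabla\theta\Vert_{L^\infty}$ and the top norm $\Vert\theta\Vert_{\dot{H}^{m+1}}$ forces the interpolation exponents to sum to exactly one, reproducing a single power of $\Vert\theta\Vert_{\dot{H}^{m+1}}$ and a single power of $\Vert\nabla\theta\Vert_{L^\infty}$, hence the bound above.

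With the key estimate in hand, Young's inequality absorbs the factor $\Vert\theta\Vert_{\dot{H}^{m+1}}$ into the dissipation, yielding
\[
\frac{d}{dt}\Vert\theta\Vert_{\dot{H}^m}^2 + \Vert\theta\Vert_{\dot{H}^{m+1}}^2 \leq C\,\Vert\nabla\theta\Vert_{L^\infty}^2\,\Vert\theta\Vert_{\dot{H}^m}^2.
\]
Since $\Vert\nabla\theta\Vert_{L^\infty}^2\in L^1(\II)$ by hypothesis, Gr\"onwall's inequality controls $\sup_t\Vert\theta\Vert_{\dot{H}^m}^2$ and $\int\Vert\theta\Vert_{\dot{H}^{m+1}}^2\, dt$ on $[t_2,t_1]$ starting from any time $t_2$ at which the $\dot{H}^m$ norm is finite. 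I would then induct on $m$: the hypotheses $\nabla\theta\in L^2(\II;L^\infty)$ and $\theta\in L^2(\II;\dot{H}^1)$ give the base case ($\theta\in L^\infty_t\dot{H}^1\cap L^2_t\dot{H}^2$), and the $L^2(\II;\dot{H}^{m+1})$ bound produced at stage $m$ guarantees $\theta(t_2)\in\dot{H}^{m+1}$ for a.e.\ $t_2$, exactly the datum needed to launch stage $m+1$. Reaching a given $m$ requires only finitely many steps, each discarding a null set of admissible starting times, so the conclusion holds for every $m\geq 2$ and a.e.\ $t_2\in(t_0,t_1)$. Finally, all of the above is \emph{a priori}, so I would justify it on the weak solution by a standard mollification or Galerkin approximation, the estimates being stable under passage to the limit.
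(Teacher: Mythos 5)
Your proposal is correct and follows essentially the same route as the paper: an $\dot{H}^m$ energy estimate in which the divergence-free cancellation reduces the nonlinearity to a commutator bounded by $C\Vert\nabla\theta\Vert_{L^\infty}\Vert\theta\Vert_{\dot{H}^{m+1}}\Vert\theta\Vert_{\dot{H}^m}$, followed by absorption into the dissipation, Gr\"onwall using $\Vert\nabla\theta\Vert_{L^\infty}^2\in L^1(\II)$, and induction on $m$ with the a.e.\ admissible starting times supplied by the $L^2_t\dot{H}^{m+1}$ bound from the previous stage. The only difference is presentational: the paper carries out the $m=1$ and $m=2$ levels explicitly and then iterates, whereas you package the induction step as a single Kato--Ponce/Moser-type commutator estimate for general $m$.
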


\begin{proof}[Proof of Proposition~\ref{prop:main}]Since $u$ is divergence free, we have the a priori estimate
  \begin{align}
    \frac{1}{2} \frac{d}{dt} \Vert \nabla \theta \Vert_{L^2}^2 + \Vert \Delta \theta \Vert_{L^2}^2 \leq \left| \int \partial_k u_j \partial_k \theta \partial_j \theta \right| \leq \Vert \Delta \theta \Vert_{L^2} \Vert \nabla \theta \Vert_{L^2} \Vert \nabla \theta \Vert_{L^\infty} \leq \frac{1}{2}  \Vert \Delta \theta \Vert_{L^2}^2 + \frac 12 \Vert \nabla \theta \Vert_{L^2}^2 \Vert \nabla \theta \Vert_{L^\infty}^2.
  \end{align}We absorb the $(1/2) \Vert \Delta\theta \Vert_{L^2}^2$ term on the left side of the above estimate and obtain that
  \begin{align}
    \Vert \theta(t) \Vert_{\dot{H}^1}^2 \leq \Vert \theta(t_0) \Vert_{\dot{H}^1}^2 e^{ \int_{t_0}^{t} \Vert \nabla \theta(s) \Vert_{L^\infty}^2\; ds}
  \end{align}which is finite for all $t_0 \leq t\in \II$ thanks to the assumption $\nabla \theta \in L^2(\II;L^\infty)$, as long as $\theta(t_0) \in \dot{H}^1$. The latter is true for a.e. $t_0>0$ since we a priori knew that $\theta \in L^2((0,\infty);\dot{H}^{1})$, and $L^2$ functions are finite a.e. (by using arguments similar to \cite[Chapter 9]{ConstantinFoias}, one may even obtain explicit bounds in terms of $\Vert \theta_0\Vert_{L^2}$). This shows that $\theta\in L^\infty(\II;\dot{H}^1) \cap L^2(\II;\dot{H}^2)$. Repeating the above argument with $\II=[t_0,t_1]$ replaced by some $\II'=[t_2,t_1]$, where $t_2>t_0$, we get
    \begin{align}
    \frac{1}{2} \frac{d}{dt} \Vert \Delta \theta \Vert_{L^2}^2 + \Vert \nabla \Delta \theta \Vert_{L^2}^2 \leq \left| \int \Delta (u_j \partial_j \theta) \Delta \theta \right| \leq C \Vert \nabla \Delta \theta \Vert_{L^2} \Vert \Delta \theta \Vert_{L^2} \Vert \nabla \theta \Vert_{L^\infty},
  \end{align}on $\II'$, and therefore obtain $\theta\in L^\infty(\II';\dot{H}^2) \cap L^2(\II';\dot{H}^3)$. Hence, for any $m\geq 2$, finitely many iterations of the above argument proves that $\theta\in L^\infty(\II'';\dot{H}^m) \cap L^2(\II'';\dot{H}^{m+1})$ for any $\II''\subset \II$, concluding the proof.\end{proof}

\begin{proof}[Proof of Theorem~\ref{thm:higher}]
If $\alpha \in (1/2,1)$ the theorem follows from the arguments given in Section~\ref{sec:proof:easy}. If $\alpha =1/2$, we simply consider the solution to lie in $C^{1/2-\epsilon} \subset L^\infty \cap C^{1/2}$, for some $\epsilon >0$, reducing the proof of the theorem to the case $\alpha \in (0,1/2)$. In this case, we apply Lemma~\ref{lemma:bootstrap} to obtain that $\nabla \theta \in L^2(\II;L^\infty)$ for any $\II=[t_0,t_1] \subset (0,\infty)$. Using Proposition~\ref{prop:main}, this implies that $\theta \in L^\infty([t_2,t_1];H^m)$ for  some large enough $m$ (any $m>d/2+1$ is sufficient), and a.e. $t_2 \in (t_0,t_1)$. The statement of the theorem follows from the Sobolev the embedding $H^m(\RRd) \subset C^{1,\delta}(\RRd)$, for some $\delta\in(0,1)$. In particular, if $d=3$ one may let $m=3$.
\end{proof}

\begin{corollary}
  The H\"older continuous weak solution $\theta$ of \eqref{eq:N1}--\eqref{eq:N4} is $C^\infty$ smooth for positive time.
\end{corollary}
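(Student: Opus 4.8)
The plan is to first promote the spatial regularity of $\theta$ to $C^\infty$ in the space variable, using the Sobolev bounds already obtained, and then to upgrade the regularity in time directly from the equation.

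First I would record that the combination of Lemma~\ref{lemma:bootstrap} and Proposition~\ref{prop:main} yields, for any $0<t_0<t_1$ and a.e.\ $t_2\in(t_0,t_1)$, the bound $\theta\in L^\infty([t_2,t_1];\dot{H}^m(\RRd))$ for every integer $m\geq 2$. Since we also have $\theta\in L^\infty_t L^2_x$, this gives $\theta\in L^\infty([t_2,t_1];H^m(\RRd))$ for every $m$. I would then invoke the Sobolev embedding $H^m(\RRd)\subset C^k(\RRd)$, valid for $m>k+d/2$, to conclude that for each $k$ all spatial derivatives of $\theta$ up to order $k$ are bounded uniformly on $[t_2,t_1]$; hence $\theta(\cdot,t)\in C^\infty(\RRd)$ at every such time. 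As $t_0,t_1$ are arbitrary and $t_2$ may be taken in a full-measure set accumulating at $t_0$, this establishes spatial $C^\infty$ smoothness at each positive time.

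Next I would turn to regularity in time, working from \eqref{eq:N1} written as $\partial_t\theta=\Delta\theta-u\cdot\nabla\theta$. Since the operators $T_{ij}$ in \eqref{eq:N2} are Calder\'on--Zygmund operators, they are bounded on every Sobolev space, so $u\in L^\infty([t_2,t_1];H^m(\RRd))$ for all $m$. Using that $H^m(\RRd)$ is a Banach algebra for $m>d/2$, the product $u\cdot\nabla\theta$ stays in $L^\infty([t_2,t_1];H^m(\RRd))$ for every $m$, whence $\partial_t\theta$ does as well. Differentiating the equation repeatedly in time and reusing the same product and Calder\'on--Zygmund bounds, one obtains inductively that $\partial_t^\ell\theta\in L^\infty([t_2,t_1];H^m(\RRd))$ for all integers $\ell,m\geq 0$. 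Together with the spatial smoothness, this shows that every mixed space-time derivative of $\theta$ is bounded on $[t_2,t_1]\times\RRd$, so that $\theta\in C^\infty\big((0,\infty)\times\RRd\big)$.

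The genuinely hard analysis is already contained in Lemma~\ref{lemma:main} and Lemma~\ref{lemma:bootstrap}; at the present stage the only step requiring attention is the time bootstrap. The potential obstacle there is to control the nonlinear term $u\cdot\nabla\theta$ and all its time derivatives in arbitrarily high Sobolev norms without losing derivatives. I expect this to be handled cleanly by the algebra property of $H^m$ for $m>d/2$ combined with the boundedness of the $T_{ij}$, so that the induction closes and no further difficulty arises.
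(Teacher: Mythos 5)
Your argument is correct, but it follows a genuinely different route from the paper's. The paper proves the corollary by differentiating the equation: setting $\tilde{\theta}=\partial_i\theta$, it observes that $\tilde{\theta}$ solves a drift-diffusion equation \eqref{eq:tildetheta} whose coefficients $\partial_j\theta$ and $\partial_i T_{ij}\theta$ now lie in $L^\infty_t C^\delta_x$ (by Theorem~\ref{thm:higher}) --- far better than the original drift, which only lay in a negative H\"older space --- and then re-runs the entire bootstrap of Theorem~\ref{thm:higher} on $\tilde{\theta}$ to gain one order of H\"older regularity per iteration. You instead exploit the full strength of Proposition~\ref{prop:main} as stated ($\dot{H}^m$ for \emph{every} $m\geq 2$), pass to spatial $C^\infty$ by Sobolev embedding, and then obtain time regularity directly from the equation via the algebra property of $H^m$ and the $H^s$-boundedness of the $T_{ij}$. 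Your route is shorter and has the advantage of explicitly addressing smoothness in the time variable, which the paper's proof leaves implicit; its cost is that it places all the weight on the claim that the energy iteration in Proposition~\ref{prop:main} closes at every order $m$ (the commutator estimates for $\Lambda^m(u\cdot\nabla\theta)$ at high $m$ are only sketched there), whereas the paper's argument needs that proposition only for a single finite $m>d/2+1$ and re-derives higher regularity for the differentiated equation from scratch. Two small points to tidy up: Lemma~\ref{lemma:bootstrap} as stated covers only $\alpha\in(0,1/2)$, so for general $\alpha$ you should obtain the hypothesis $\nabla\theta\in L^2(\II;L^\infty)$ of Proposition~\ref{prop:main} from the conclusion $\theta\in L^\infty([t_0,\infty);C^{1,\delta})$ of Theorem~\ref{thm:higher} instead; and to pass from boundedness of all mixed derivatives to joint $C^\infty$ smoothness one should note that each $\partial_t^\ell\theta$ is Lipschitz in time with values in $H^m$ because $\partial_t^{\ell+1}\theta$ is bounded there.
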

\begin{proof}
  Let $\tilde{\theta} = \partial_i \theta$ for some $i\in \{1,\ldots,d\}$. From Theorem~\ref{thm:higher} we obtain that for any $0<t_1<T$ we have $\tilde{\theta} \in L^\infty([t_1,T];C^{\delta}(\RRd))$ for some $\delta >0$. From Proposition~\ref{prop:main} we also have that $\tilde{\theta} \in L^\infty([t_1,T];L^2(\RRd)) \cap L^2([t_1,T];\dot{H}^1(\RRd))$. Lastly, the equation satisfied by $\tilde{\theta}$, obtained by applying $\partial_{i}$ to \eqref{eq:N1}, is
  \begin{align}
  \partial_t \tilde{\theta} - \Delta \tilde{\theta} + \partial_j \theta  T_{ij} \partial_i \tilde{\theta} + \partial_i T_{ij} \theta \partial_j \tilde{\theta} = 0 \label{eq:tildetheta}
  \end{align}where we use the summation convention over repeated indexes, and $T_{ij}$ are Calder\'on-Zygmund operators. Given that the coefficients $\partial_j \theta, \partial_i T_{ij} \theta \in L^\infty([t_1,T];C^\delta(\RRd))$ (cf.~Theorem~\ref{thm:higher}) are smoother than the a priori smoothness of the velocity in \eqref{eq:N1} (which belonged to a H\"older space of negative index), it is straightforward to repeat the arguments used to prove Theorem~\ref{thm:higher} in order to show that $\tilde{\theta} \in L^\infty([t_2,T];C^{1,\gamma}(\RRd))$ for some $\gamma >0$, and a.e. $t_2 \in (t_1,T)$. Since $i\in  \{1,\ldots,d\}$ was arbitrary, this shows the solution $\theta$ is $C^{2,\gamma}$ for some $\gamma>0$. The proof of the corollary is concluded by further taking derivatives of the equation, and iterating the above arguments.
\end{proof}

\section{Higher regularity for the critically dissipative modified SQG equations}\label{sec:MQG} \setcounter{equation}{0}

Here we address the applicability of the method presented in Section~\ref{sec:proof:hard} above, to prove higher regularity for the modified critically dissipative SQG equation \eqref{eq:MQG:1}--\eqref{eq:MQG:2}, for the parameter range  $\beta \in (1,2)$. Note that when $\beta = 2$ the equations \eqref{eq:MQG:1}--\eqref{eq:MQG:2} reduce to the heat equation, and regularity is trivial. In \cite{MiaoXue}, Miao and Xue prove the global existence of weak solutions $\theta \in L^\infty([0,\infty);L^2) \cap L^2((0,\infty); \dot{H}^{\beta/2})$, using methods similar to \cite{ConstantinFoias,FriedlanderVicol,Temam}, the local existence of smooth solutions, and the eventual regularity of the weak solutions (see also \cite{ChaeConstWu1,ChaeConstWu2,CCCGW,Kiselev} and references therein, for further results concerning  generalizations of the SQG equations). Moreover, in \cite[Proposition 5.1]{MiaoXue}, the authors prove the following regularity criterion: if a weak solution $\theta$ lies in $L^\infty([t_0,\infty);C^\alpha)$, with $\alpha > (\beta-1)/2$, then $\theta \in C^\infty( (t_1,\infty) \times {\mathbb R}^2)$ for any  $t_1 > t_0$. Such a minimality requirement on $\alpha$ seems to be purely technical, as the problem is \textit{subcritical} in $C^\alpha$ for any $\alpha >0$.

The proof of Theorem~\ref{thm:higher} of the present paper directly applies to \eqref{eq:MQG:1}--\eqref{eq:MQG:2}, with $\beta \in (1,2)$, and gives the following regularity criterion for weak solutions.
\begin{theorem}\label{thm:MQG}
Let $\theta_0 \in L^2$ be given, and let $\II=[t_0,t_1] \subset (0,\infty)$. Given a weak solution
\begin{align}
\theta \in L^\infty(\II;L^2({\mathbb R}^2)) \cap L^2(\II;\dot{H}^{\beta/2}({\mathbb R}^2)) \label{eq:thm:MQG:1}
\end{align}of the initial value problem associated to \eqref{eq:MQG:1}--\eqref{eq:MQG:2}, if
\begin{align}
\theta \in L^\infty(\II;C^\alpha({\mathbb R}^2)) \label{eq:thm:MQG:2}
\end{align}where
\begin{align}
  \min\left\{ \frac{2-\beta}{2}, \frac{\beta-1}{2} \right\} < \alpha < 1 \label{eq:thm:MQG:3}
\end{align} and $\beta \in (1,2)$, then
\begin{align}
  \theta \in L^\infty( [t_2,t_1]; C^{1,\delta}({\mathbb R}^2))
\end{align}for some $\delta>0$, and for a.e. $t_2 \in (t_0,t_1)$. Additionally, we have $\theta \in C^\infty( (t_0,t_1]\times {\mathbb R}^2)$.
\end{theorem}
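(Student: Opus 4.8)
The plan is to re-run the scheme of Sections~\ref{sec:proof:easy}--\ref{sec:proof:hard}, the only structural changes being that the frequency-localized smoothing exponent $2^{2j}$ produced by $-\Delta$ is replaced by $2^{\beta j}$ coming from $(-\Delta)^{\beta/2}$, and that the drift $u = \nabla^\perp(-\Delta)^{\beta/2-1}\theta$ from \eqref{eq:MQG:2} is now a Fourier multiplier of order $\beta-1$ in $\theta$, so that $\Vert \Delta_k u\Vert_{L^q}\leq C 2^{(\beta-1)k}\Vert\Delta_k\theta\Vert_{L^q}$ replaces the order-one bound used in the body of the paper. Since $u$ is still divergence free, the Bony decomposition \eqref{eq:Bony} and the integrations by parts moving the derivative onto the low-frequency factor are unchanged. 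As in the proof of Theorem~\ref{thm:higher} I would reduce to $\alpha$ strictly below the relevant threshold (absorbing the endpoint by $C^\alpha\subset C^{\alpha-\eps}$), and then split according to which of the two numbers in \eqref{eq:thm:MQG:3} is exceeded by $\alpha$; since \eqref{eq:thm:MQG:3} asks only that $\alpha$ beat the \emph{smaller} of them, at least one of the two arguments below applies.

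For $\alpha>(\beta-1)/2$ I would imitate Section~\ref{sec:proof:easy}: working in $L^\infty_t\dot B^{\alpha_p}_{p,\infty}$ and using the full $2^{\beta j}$ smoothing, the low-high and medium paraproducts in \eqref{eq:Bony} remain harmless for every $\alpha>0$, while the high-high term produces, in place of \eqref{eq:CIW:4}, a geometric sum with ratio $2^{(\beta-1-\alpha-\alpha_p)k}$; this converges precisely when $\alpha+\alpha_p>\beta-1$, that is (sending $p\to\infty$) when $2\alpha>\beta-1$. The dissipation then yields a definite regularity gain at each step, and finitely many iterations push $\theta$ past H\"older index one, whence the conclusion as in \eqref{eq:CIW3}.

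For $\alpha>(2-\beta)/2$ I would instead run Lemma~\ref{lemma:main} and Lemma~\ref{lemma:bootstrap}, starting now from the natural energy regularity $\theta\in L^2(\II;\dot H^{\beta/2})=L^2(\II;\dot B^{\beta/2}_{2,2})$ and improving integrability at the fixed index $\beta/2$. Inequality \eqref{eq:ODE1} acquires $c\,2^{\beta j}$ on the left, the kernel bound \eqref{eq:kernel:Lr} acquires $2^{-\beta j/r}$, and one retraces \eqref{eq:lemma:4}--\eqref{eq:lemma:5} tracking the net power of $2^j$ in the final $\ell^r$ norm. The essential point is that, after spending the $\beta$ orders of smoothing against the order-$(\beta-1)$ drift and using the $2^{-k\alpha}$ decay of the $C^\alpha$ factor on one side of each high-high product together with the $\dot B^{\beta/2}$ control on the other, the Chemin--Lerner norm $\tilde L^2(\II;\dot B^{s}_{q,r})$ stays finite for indices up to $s\approx\beta/2+\alpha$ when $q$ is close to $p$. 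Iterating the integrability gain of Lemma~\ref{lemma:main} to reach a large $p$, then performing one $\eps$-shifted step as in Lemma~\ref{lemma:bootstrap} followed by the embedding $\dot B^{s}_{p,1}\subset\dot B^{s-2/p}_{\infty,1}$, lands $\theta$ in $L^2(\II;\dot B^{1}_{\infty,1})$, hence $\nabla\theta\in L^2(\II;L^\infty)$ and also $u\in L^2(\II;L^\infty)$. From this point the argument is insensitive to $\beta$: the fractional analog of Proposition~\ref{prop:main}, namely the estimates $\tfrac12\tfrac{d}{dt}\Vert\theta\Vert_{\dot H^m}^2+\Vert\theta\Vert_{\dot H^{m+\beta/2}}^2\lesssim(\Vert u\Vert_{L^\infty}+\Vert\nabla\theta\Vert_{L^\infty})\Vert\theta\Vert_{\dot H^m}^2$ (the top-order contribution of $u$ is absorbed by the dissipation because $\beta-1<\beta/2$ for $\beta<2$), closed by Gr\"onwall and iterated in $m$, gives $\theta\in L^\infty([t_2,t_1];\dot H^m)$ for every $m$; Sobolev embedding $H^m({\mathbb R}^2)\subset C^{1,\delta}$ for $m>2$ yields the stated conclusion, and differentiating \eqref{eq:MQG:1}--\eqref{eq:MQG:2} and iterating, exactly as in the Corollary above, gives $\theta\in C^\infty((t_0,t_1]\times{\mathbb R}^2)$.

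As throughout the paper the crux is the high-high interaction $J_3$ of \eqref{eq:Bony}, and it is exactly this term that forces the two thresholds in \eqref{eq:thm:MQG:3}: the H\"older bootstrap survives only while $\alpha>(\beta-1)/2$, whereas the Chemin--Lerner method can cross the critical index one --- the regularity required for $\nabla\theta\in L^\infty$ --- only while $\beta/2+\alpha>1$, i.e. $\alpha>(2-\beta)/2$. The one genuinely new difficulty relative to the case $\beta=2$ treated above is that, with $\beta<2$, there is a strict smoothing surplus $1-\beta/2$ over the baseline energy index $\beta/2$, so the exponent governing the final $\ell^r$ summation in \eqref{eq:lemma:5} no longer coincides with the exponent controlling the internal high-high sum (they agree exactly when $\beta=2$); I would therefore expect the main care to lie in re-examining the $\min\{\cdot,\cdot\}$ splitting of \eqref{eq:kernel:Lr} and in checking that the admissible window for the shift $\eps_\alpha$ stays open at each step of the iteration.
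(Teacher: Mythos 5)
Your proposal follows essentially the same route as the paper: reduce to the two thresholds in \eqref{eq:thm:MQG:3}, handle $\alpha>(\beta-1)/2$ by the H\"older--Besov bootstrap of Section~\ref{sec:proof:easy} (this is exactly the criterion of Miao--Xue, which the paper simply cites, so the only genuinely new case is $\beta\in(3/2,2)$ with $(2-\beta)/2<\alpha<(\beta-1)/2$), and handle $\alpha>(2-\beta)/2$ by rerunning Lemma~\ref{lemma:main} with $2^{\beta j}$ smoothing and the order-$(\beta-1)$ drift bound $\Vert\Delta_k u\Vert_{L^q}\leq C2^{(\beta-1)k}\Vert\Delta_k\theta\Vert_{L^q}$, gaining integrability at the fixed regularity index $\beta/2$. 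Your identification of $J_3$ as the source of both thresholds matches the paper's admissible window \eqref{eq:MQG:range}.

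The one substantive divergence is the endpoint of the Chemin--Lerner bootstrap. The paper stops at $\theta\in L^2(\II;\dot{B}^{\beta/2}_{\infty,1})$ and closes the $\dot H^m$ estimates by interpolation, absorbing the top-order contribution of $u$ into the dissipation via $\beta-1<\beta/2$; you instead claim to reach $\dot{B}^{1}_{\infty,1}$, i.e.\ $\nabla\theta\in L^2(\II;L^\infty)$, and your subsequent energy estimate genuinely uses this. That extra gain of $1-\beta/2$ derivatives is a fixed positive amount, not an $\eps$, and it is not automatic: if one bounds $J_1$ by interpolating $\Vert S_{k-1}u\Vert_{L^q}$ against the $\dot B^{\beta/2}_{p,2}$ information as in \eqref{eq:J1:bound}, the low-frequency sum forces the \emph{lower} bound $q/p>(\beta/2-\alpha)/(\beta-1-\alpha)>1$ appearing in \eqref{eq:MQG:range}, which caps the attainable index at $s<\alpha+\beta-1$; this exceeds $1$ only when $\alpha>2-\beta$, which fails on the whole new range when $\beta\in(3/2,2)$. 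To recover your heuristic $s<\beta/2+\alpha$ (which does beat $1$ exactly when $\alpha>(2-\beta)/2$) you must estimate $\Vert S_{k-1}u\Vert_{L^\infty}\leq C2^{k(\beta-1-\alpha)}\Vert\theta\Vert_{C^\alpha}$ directly from the H\"older norm (using $\alpha<\beta-1$) and place the $L^p$ interpolation on the $\Delta_k\theta$ factor instead, so that $q/p$ may be taken close to $1$. Either fix this step or, as the paper does, stop the bootstrap at index $\beta/2$ and replace $\Vert\nabla\theta\Vert_{L^\infty}$ in your $\dot H^m$ inequality by commutator and interpolation estimates that use only $u\in L^2(\II;L^\infty)$ and the gain $\beta/2-(\beta-1)>0$; as written, your Proposition~\ref{prop:main} analog does not close without one of these two repairs.
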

\begin{proof}
  First we note that if $\alpha \in ( (\beta -1)/2, 1)$, for any $\beta \in (1,2)$ this result was proven in~\cite{MiaoXue}. Therefore, in order to complete the proof of the theorem it is left to treat the range $(2-\beta)/2 < (\beta - 1)/2$, which is equivalent to $\beta \in (3/2,2)$, under the regularity criterion that $(2-\beta)/2<\alpha<(\beta-1)/2$. In order to avoid redundancy we only outline the differences with the proof of Theorem~\ref{thm:higher}.

 Using methods directly corresponding to those described in the proof of Lemma~\ref{lemma:main} we first prove that if a weak solution $\theta$ satisfying \eqref{eq:thm:MQG:1}--\eqref{eq:thm:MQG:2} is such that $\theta \in L^2(\II; \dot{B}^{\beta/2}_{p,2})$, for some $p\geq 2$, then $\theta \in \tilde{L}^2(\II;\dot{B}^{\beta/2}_{q,r})$, for any $1\leq r \leq \infty$, and for any value of $q>p$ such that
 \begin{align}
   \frac{q}{p} \in \left( \frac{\beta/2 - \alpha}{\beta - 1 - \alpha}, \frac{\beta/2 - \alpha}{\beta/2 - 2\alpha}\right).\label{eq:MQG:range}
 \end{align}Due to our choice of $\alpha$, the range \eqref{eq:MQG:range} is not empty. Since initially $\theta \in L^2(\II; \dot{B}^{\beta/2}_{2,2})$, we can therefore bootstrap this argument finitely many times, and similarly to the proof of Lemma~\ref{lemma:bootstrap}, we show that for $p$ large enough we have $\theta \in L^2(\II; \dot{B}^{\beta/2+2/p}_{p,1}) \subset L^2(\II; \dot{B}^{\beta/2}_{\infty,1}) $. As in Proposition~\ref{prop:main}, this implies, via energy estimates and interpolation inequalities, that $\theta\in L^\infty([t_2,t_1];\dot{H}^m) \cap L^2([t_2,t_1];\dot{H}^{m+\beta/2})$ for any $m\geq 1$, and a.e. $t_2 \in (t_0,t_1)$. The proof of \eqref{eq:thm:MQG:3} follows now from the Sobolev embedding, while the proof of higher regularity consists of taking derivatives of the equation and repeating the arguments listed above.
\end{proof}

Due to the sub-criticality of the $C^\alpha$ norms, for any $\alpha>0$, with respect to the natural scaling of the equations \eqref{eq:MQG:1}--\eqref{eq:MQG:2}, we conjecture that condition \eqref{eq:thm:MQG:2} may be replaced with $0 <\alpha < 1$, for any $\beta \in (1,2)$.

\subsection*{Acknowledgment} The work of S.F. was in part supported by the NSF grant DMS 0803268. The authors would like
to thank Peter Constantin for fruitful discussions on the topic, and for pointing out the advantage of space-time Besov
spaces.

\end{document}